\newcommand{\mylabel}[2]{#2\def\@currentlabel{#2}\label{#1}}
\newcommand{\bsm}{\left(\begin{smallmatrix}}
\newcommand{\esm}{\end{smallmatrix}\right)}
\newtheorem{theorem}{Theorem}[section]
\newtheorem{corollary}[theorem]{Corollary}
\newtheorem*{thmintro}{Theorem}
\newtheorem{question}[theorem]{Question}
\theoremstyle{definition}
\newtheorem{definition}[theorem]{Definition}
\newtheorem{example}[theorem]{Example}
\newtheorem{remark}[theorem]{Remark}
\newtheorem*{claim*}{Claim}
\newcommand{\Z}{\mathbb{Z}}
\newcommand{\R}{\mathbb{R}}
\newcommand{\id}{\operatorname{id}}
\begin{document}
\title{Homotopy ribbon discs with a fixed group}
\author{Anthony Conway}
\address{Massachusetts Institute of Technology, Cambridge MA 02139, United States}
\email{anthonyyconway@gmail.com}
\begin{abstract}
In the topological category, the classification of homotopy ribbon discs is known when the fundamental group $G$ of the exterior is~$\Z$ and the Baumslag-Solitar group~$BS(1,2)$.
We prove that if a group~$G$ is geometrically $2$-dimensional and satisfies the Farrell-Jones conjecture,  then a condition involving the fundamental group ensures that
exteriors of aspherical homotopy ribbon discs with fundamental group $G$ are s-cobordant rel.\ boundary.
When $G$ is good, this leads to the classification of such discs.
As an application,  for any knot~$J \subset S^3$ whose knot group~$G(J)$ is good,  we classify the homotopy ribbon discs for~$J \# -J$ whose complement has group~$G(J)$.
A similar application is obtained for $BS(m,n)$ when $|m-n|=1$.
\end{abstract}
\maketitle

\section{introduction}

Given a knot $K \subset S^3$, we consider the problem of classifying locally flat discs~$D \subset D^4$ with boundary~$K$, up to topological ambient isotopy rel.\ boundary.
Naturally,~$K$ need not bound such a disc (i.e.~$K$ need not be \emph{slice}) but if it does, then it is conjectured that it necessarily bounds one for which the inclusion induced map~$\pi_1(S^3 \setminus K) \to \pi_1(D^4 \setminus D)$ is surjective; such discs are called \emph{homotopy ribbon}.
For this reason, and for technical purposes, we restrict our attention to homotopy ribbon discs with boundary $K$.
Additionally, observe that if~$D_1$ and~$D_2$ are two ambiently isotopic slice discs with boundary~$K$,  then their groups must be isomorphic:~$\pi_1(D^4 \setminus D_1) \cong \pi_1(D^4 \setminus D_2)$.
The goal of this article is to study the following question. 

\begin{question}
\label{question}
Given a knot~$K \subset S^3$ and a ribbon group~$G$,  can one describe the set 
of homotopy ribbon discs for~$K$ with group~$G$, considered up to topological ambient isotopy rel.  boundary?
\end{question}

Here, a group is called \emph{ribbon} if it arises as~$\pi_1(D^4 \setminus D)$ for some (smoothly embedded) ribbon disc~$D \subset D^4$ \footnote{$D \subset  D^4$ is \emph{ribbon} if the restriction of the radial function $D^4 \to \R$ to~$D$ is Morse and admits no local maxima.}.
We work with ribbon groups instead of fundamental groups of locally flat disc exteriors for convenience: the former admit an algebraic characterisation~\cite[Theorem~2.1]{FriedlTeichner},  while no such description appears to be known for the latter~\cite[Question 1.7]{FriedlTeichner}.
Examples of ribbon groups include~$G=\Z$ and the Baumslag-Solitar group~$G=BS(1,2)$ and in those cases, Question~\ref{question} has been fully resolved~\cite{FriedlTeichner,ConwayPowellDiscs}.
The answers, which will be partially recalled in Remark~\ref{rem:Classification} below,  both rely on Freedman's~$5$-dimensional s-cobordism theorem~\cite{Freedman} and therefore make use of the fact that~$\Z$ and~$BS(1,2)$ are \emph{good} groups.
We refer to~\cite[Definition 12.12]{DET} for the precise definition of a good group and to~\cite[Chapter 19]{DET} for a survey,  but note that the class of good groups contains all groups of subexponential growth as well as all elementary amenable groups (e.g.  solvable groups).
At the time of writing, it is unknown whether all groups are good: this  is equivalent to the question of whether the free group~$F_2$ is good~\cite[Proposition 19.7]{DET}.

\begin{remark}
\label{rem:Caveat}
The only elementary amenable ribbon groups are~$\Z$ and~$BS(1,2)$,  as can be seen by combining~\cite[Corollary 2.6.1]{Hillman} with the fact that ribbon groups have deficiency one and abelianise to $\Z$.
As a consequence,  if the class of good ribbon groups were eventually shown to coincide with the class of elementary amenable ribbon groups, 
then the current article would contain no new classification result.
On the other hand,  Theorem~\ref{thm:Injective} contains criteria for certain disc exteriors to be s-cobordant rel.\ boundary and holds regardless of the state of the art on the class of good groups. 
We also hope that the approach taken here will be of interest given the recent surge of activity around the topic of $2$-discs in the $4$-ball, both in the smooth and topological category~\cite{JuhaszZemke,ConwayPowellDiscs, Hayden,SundbergSwann,HaydenKjuchukovaKrishnaMillerPowellSunukjian,HaydenSundberg, HaydenComplex, LipshitzSarkar,DaiMallickStoffregen}.
\end{remark}

In order to give a flavour of our results without listing technical assumptions this early on, we mention a corollary of our main theorems (Theorems~\ref{thm:Injective} and~\ref{thm:Classification}).
To state this result succintly,  we introduce some terminology.
A \emph{$G$-ribbon disc} refers to a homotopy ribbon disc~$D \subset D^4$ with~$\pi_1(D^4 \setminus D) \cong G$, and given a knot $K$,  we write $\mathcal{D}_G(K)$ for the set of rel. \ boundary topological ambient isotopy classes of~$G$-ribbon discs with boundary $K$.
We also write~$M_K$ for the result of~$0$-surgery on~$K$ and use~$\operatorname{Epi}^{\operatorname{FT}}(\pi_1(M_K),G)$ to denote the set of epimorphisms $\pi_1(M_K) \twoheadrightarrow G$ that satisfy~\eqref{eq:FT} below.
While this definition will be discussed in greater detail in the next couple of sections, for the moment we simply note that $\operatorname{Aut}(G)$ acts on $\operatorname{Epi}^{\operatorname{FT}}(\pi_1(M_K),G)$ by postcomposition, allowing us to consider the orbit set~$\operatorname{Epi}^{\operatorname{FT}}(\pi_1(M_K),G)/\operatorname{Aut}(G)$.
Mapping a~$G$-ribbon disc $D \in \mathcal{D}_G(K)$ with aspherical complement to 
the inclusion induced homomorphism~$\pi_1(M_K) \twoheadrightarrow \pi_1(D^4 \setminus D)$  determines an element $\Phi(D)$ in this orbit set.

\begin{thmintro}
Fix a knot $K \subset S^3$.
\begin{enumerate}
\item If $G$ is a knot group (i.e.  $G=\pi_1(S^3 \setminus J)$ for some knot $J$), then exteriors of $G$-ribbon discs $D_1,D_2 \in \mathcal{D}_G(K)$ are s-cobordant rel. \ boundary if $\Phi(D_1)=\Phi(D_2)$.
If $G$ is good, then $\Phi$ induces a bijection $\mathcal{D}_{G}(K) \approx \operatorname{Epi}^{\operatorname{FT}}(\pi_1(M_K),G)/\operatorname{Aut}(G)$.
\item If $m,n \in \Z$ are such that $|m-n|=1$ and $G$ is the Baumslag-Solitar group $BS(m,n)=\langle a,b \mid ab^m a^{-1}=b^n \rangle$,  then exteriors of aspherical $G$-ribbon discs $D_1,D_2 \in \mathcal{D}_G(K)$ are s-cobordant rel. \ boundary if $\Phi(D_1)=\Phi(D_2)$.
If $G$ is good,  then $\Phi$ induces a bijection~$\mathcal{D}_G^a(K) \approx \operatorname{Epi}^{\operatorname{FT}}(\pi_1(M_K),G)/\operatorname{Aut}(G)$, where $\mathcal{D}_G^a(K) \subset \mathcal{D}_G(K)$ denotes the subset of~$G$-ribbon discs with aspherical exterior.
\end{enumerate}
\end{thmintro}

Examples~\ref{ex:KnotGroup} and~\ref{ex:BS} describe how this result follows from Theorems~\ref{thm:Injective} and~\ref{thm:Classification}.
Additionally,  as we explain in more detail in Remark~\ref{rem:Classification} below,  this theorem recovers the previously known classifications for the groups $BS(0,1)=\Z$ and $BS(1,2)$ since,  for these groups, homotopy-ribbon disc exteriors are known to be aspherical.


\subsection{Existence}

We recall and motivate a sufficient condition for the existence of a~$G$-ribbon disc with boundary $K$, which is due to Friedl and Teichner~\cite[Theorem 1.9]{FriedlTeichner}.
First,  if~$K$ bounds a locally flat disc~$D \subset D^4$, then $\partial N_D=M_K$, where $N_D:=D^4 \setminus \nu D$ is the exterior of~$D$ and~$M_K$ denotes the $3$-manifold obtained by $0$-framed surgery on~$K$.
Next, if~$D \subset D^4$ is a~$G$-ribbon disc for a knot~$K$, then there is an epimorphism~$\pi_1(M_K) \twoheadrightarrow \pi_1(N_D) \cong G$ and~$(N_D,M_K)$ satisfies Poincar\'e duality or, using surgery theory jargon,  is a (4-dimensional) \emph{Poincar\'e pair}.
If, additionally, the disc exterior~$N_D=D^4 \setminus \nu D$ is aspherical,  then we have a homotopy equivalence~$N_D  \simeq K(G,1)$ and we deduce that~$(K(G,1),M_K)$ is a Poincar\'e pair.

\begin{remark}
\label{rem:Whitehead}
It is expected that ribbon disc exteriors are aspherical~\cite[Conjecture 6.5]{GordonRibbon} (see also~\cite{HowieAsphericity}).
As noted in~\cite[Section~2]{FriedlTeichner} this 
would imply the \emph{ribbon group conjecture}: ribbon groups are geometrically $2$-dimensional\footnote{Friedl and Teichner refer to geometrically $2$-dimensional groups as \emph{aspherical} groups.}.
Here recall that a group~$G$ is called \emph{geometrically~$2$-dimensional} if~$K(G,1)$ is (homotopy equivalent to) a~$2$-complex.
Both statements are in fact particular cases of the~\emph{Whitehead conjecture} which states that every connected subcomplex of a~$2$-dimensional aspherical CW complex is itself aspherical~\cite{Whitehead}; see~\cite{Rosebrock} for a nice overview.
Howie proved that locally indicable ribbon groups are geometrically~$2$-dimensional~\cite[Theorem 5.2]{HowieLocally}.
On the other hand, to the best of our knowledge,  the Whitehead conjecture is not known to imply that exteriors of homotopy ribbon discs are aspherical; see also Remark~\ref{rem:Classification}~below.
\end{remark}


We argued that if $D$ is a $G$-ribbon disc with aspherical exterior and boundary a knot $K$,  then $\pi_1(M_K) \twoheadrightarrow \pi_1(N_D)\cong G$ is an epimorphism and $(K(G,1),M_K)$ is a Poincar\'e pair.
On the other hand, if we start with an epimorphism~$\pi_1(M_K)  \twoheadrightarrow G$ onto a group $G$,  then 
there is an embedding~$\varphi \colon  M_K \hookrightarrow K(G,1)=BG$ that induces the given surjection on fundamental groups and, if $G$ is geometrically $2$-dimensional, then~\cite[Lemma~3.2]{FriedlTeichner} shows that~$(K(G,1),M_K)$ is a Poincar\'e pair if and only if the induced map
\begin{equation}
\label{eq:FT}
 \varphi^* \colon H^i(BG;\Z[G]) \to H^i(M_K;\Z[G]_\varphi) \text{ is an isomorphism  for } i=1,2.
 \tag{FT}
 \end{equation}
Under an additional condition on the group~$G$, Friedl and Teichner prove that this leads to a sufficient condition for $K$ to bound a $G$-ribbon disc~\cite[Theorem 1.9 and Lemma 3.2]{FriedlTeichner}.

\begin{theorem}[Friedl-Teichner]
\label{thm:FT}
Let~$K \subset S^3$ be a knot and let~$G$ be a good geometrically $2$-dimensional ribbon group such that~$\widetilde{L}_4^h(\Z[G])=0$.
If~$\varphi \colon \pi_1(M_K) \twoheadrightarrow G$ is an epimorphism that satisfies~\eqref{eq:FT}, then there exists a~$G$-ribbon disc $D \subset D^4$ with aspherical exterior and boundary~$K$ 
such that the composition~$\pi_1(M_K) \twoheadrightarrow \pi_1(N_D)\cong G$ agrees with~$\varphi$.
\end{theorem}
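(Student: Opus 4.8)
The plan is to realise the Poincaré pair $(BG,M_K)$ by a topological $4$-manifold and then to recognise that manifold as a slice disc exterior. By the discussion preceding the statement, $\varphi$ determines a map $M_K\hookrightarrow BG$ inducing $\varphi$ on $\pi_1$, and since $G$ is geometrically $2$-dimensional, hypothesis~\eqref{eq:FT} together with~\cite[Lemma~3.2]{FriedlTeichner} shows that $(BG,M_K)$ is a $4$-dimensional Poincaré pair. I would therefore aim to produce a compact topological $4$-manifold $N$ together with a homotopy equivalence of pairs $(N,\partial N)\xrightarrow{\simeq}(BG,M_K)$ restricting to a homeomorphism $\partial N\xrightarrow{\cong} M_K$. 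Since $G$ is a ribbon group it has deficiency one and abelianises to $\Z$, so $BG$ is a finite aspherical $2$-complex with $H_*(BG;\Z)\cong H_*(S^1;\Z)$; consequently any such $N$ is automatically an aspherical homology circle with $\pi_1(N)\cong G$, which is exactly the homotopy type expected of a homotopy ribbon disc exterior.

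To construct $N$ I would run topological surgery on the pair, rel boundary. One first needs a degree-one normal map $(N_0,M_K)\to(BG,M_K)$ that is the identity on $M_K$, i.e.\ a $\mathrm{TOP}$-reduction of the Spivak normal fibration extending the canonical reduction on the manifold $M_K$; because $BG$ is only $2$-dimensional the obstructions to such a reduction are concentrated in low degree (the only potentially nonzero one lying in $H^3(BG,M_K;\Z/2)\cong\Z/2$), and I would dispose of it using the freedom in the topological normal structure. Fixing a normal map $F_0$, its surgery obstruction $\sigma(F_0)$ lives in $L_4(\Z[G])$. Varying the normal invariant changes $\sigma(F_0)$ by the image of the surgery obstruction map on $[BG/M_K,\mathrm{G/TOP}]$; because $H_*(BG;\Z)\cong H_*(S^1;\Z)$, this indeterminacy is exactly the image of $L_4(\Z)\to L_4(\Z[G])$, so that the well-defined obstruction to achieving a vanishing surgery obstruction is the class of $\sigma(F_0)$ in the reduced group $\widetilde{L}_4^h(\Z[G])=\operatorname{coker}\!\big(L_4(\Z)\to L_4(\Z[G])\big)$. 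This class vanishes by hypothesis, so after adjusting the normal invariant (for instance by connected sum with copies of the Freedman $E_8$-manifold) I may assume $\sigma(F_0)=0$.

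With a normal map of trivial surgery obstruction in hand, surgery on circles makes $\pi_1(N_0)\to G$ an isomorphism, after which the surgery kernel is concentrated in the middle degree as a stably free $\Z[G]$-module carrying a nonsingular form whose Witt class is $\sigma(F_0)=0$. This is where the hypothesis that $G$ is \emph{good} is essential: by Freedman's disc embedding theorem~\cite{Freedman,DET} the relevant framed immersed $2$-spheres can be made disjointly embedded, so the kernel can be surgered away to yield the desired homotopy equivalence $(N,\partial N)\xrightarrow{\simeq}(BG,M_K)$ rel $M_K$. I expect this surgery step — both the reduction of the obstruction to $\widetilde{L}_4^h(\Z[G])$ and the appeal to goodness to complete middle-dimensional surgery — to be the main obstacle.

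It remains to recover the disc from $N$. Writing $M_K=E_K\cup_{T^2}(S^1\times D^2)$ for the zero-surgery decomposition, the core $c=S^1\times\{0\}$ of the surgery solid torus is isotopic in $M_K$ to the meridian $\mu$ of $K$. Attaching a $2$-handle to $N$ along $c$ with the $0$-framing reverses the surgery on the boundary and produces $\widehat{N}:=N\cup(D^2\times D^2)$ with $\partial\widehat{N}=S^3$. Since $\pi_1(M_K)$ is normally generated by $\mu$ and $\varphi$ is surjective, $G=\pi_1(N)$ is normally generated by the image of $\mu$; killing $\mu$ by the handle attachment therefore makes $\widehat{N}$ simply connected, and a Mayer–Vietoris computation (the $2$-cell is attached along a generator of $H_1(N;\Z)\cong\Z$) shows $\widetilde{H}_*(\widehat{N};\Z)=0$. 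Thus $\widehat{N}$ is a contractible compact topological $4$-manifold with boundary $S^3$, hence homeomorphic to $D^4$ by Freedman's work~\cite{Freedman,DET}. The cocore $\{0\}\times D^2$ of the attached handle is then a locally flat disc $D\subset\widehat{N}=D^4$ with $\partial D=K$ and exterior $\widehat{N}\setminus\nu D=N$. By construction $N\simeq BG$ is aspherical, the inclusion-induced map $\pi_1(M_K)\to\pi_1(N)\cong G$ is $\varphi$, and $D$ is therefore a $G$-ribbon disc with aspherical exterior and boundary $K$ realising $\varphi$, as required.
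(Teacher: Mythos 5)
This statement is not proved in the paper but quoted from Friedl--Teichner \cite[Theorem 1.9 and Lemma 3.2]{FriedlTeichner}, and your proposal is essentially a correct reconstruction of their surgery-theoretic argument: realise the Poincar\'e pair $(BG,M_K)$ by a $4$-manifold, using $\widetilde{L}_4^h(\Z[G])=0$ to kill the surgery obstruction modulo the $L_4(\Z)$-indeterminacy of the normal invariant and goodness of $G$ to complete middle-dimensional surgery, then cap off with the $2$-handle dual to the zero-surgery and invoke Freedman to identify the result with $D^4$. The one step you leave under-justified is the existence of a degree one normal map restricting to the identity on $M_K$: since $[BG,G/\mathrm{TOP}]\cong H^2(BG;\Z_2)\oplus H^4(BG;\Z)=0$ for a $2$-complex $BG$ with $H_1(BG)\cong\Z$ and $H_2(BG)=0$, there is no ``freedom in the topological normal structure'' over $BG$ itself, and the obstruction in $H^3(BG,M_K;\Z_2)\cong\Z_2$ must instead be killed by varying the fibre-homotopy trivialisation of the normal bundle of $M_K$ (which suffices because $\pi_2(G)\to\pi_2(G/\mathrm{TOP})$ is onto and $H^2(M_K;\Z_2)\to H^3(BG,M_K;\Z_2)$ is an isomorphism).
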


\begin{remark}
\label{rem:FT}
We make a couple of remarks on this theorem.
\begin{itemize}
\item Friedl and Teichner actually prove a stronger result.  Instead of asking for~$G$ to be geometrically $2$-dimensional, they merely demand that~$H_3(G)=0$ and~$H^i(G;\Z[G])=0$ for~$i>2$ and instead of assuming that $G$ is ribbon, they only require that~$G$ be finitely presented  and satisfy~$H_1(G)=\Z$ and~$H_2(G)=0$.
Finally, they do not require $G$ to be good, only that the surgery sequence (with $h$-decorations) be exact for all 4-dimensional Poincar\'e pairs $(X,M)$ with~$\pi_1(X)=G$.
\item The fact that the disc exterior is aspherical is implicit in~\cite[proof of Theorem 1.9]{FriedlTeichner}: their surgery theoretic argument yields a disc $D$ whose exterior~$N_D=D^4 \setminus \nu D$ is homotopy equivalent to $K(G,1)$, which is aspherical.
\item The groups~$\Z$ and~$BS(1,2)$ satisfy all the assumptions of Theorem~\ref{thm:FT}.
Additionally, for those groups,  condition~\eqref{eq:FT} simplifies considerably.
Indeed if $G$ is poly-(torsion-free abelian) (or PTFA for short),  then~\eqref{eq:FT} reduces to
\begin{equation}
\label{eq:Ext}
\operatorname{Ext}_{\Z[G]}^1(H_1(M_K;\Z[G]_\varphi),\Z[G])=0
\tag{Ext}
\end{equation}
and for $G=\Z$ it reduces further to the condition $\Delta_K=1$; all of this is explained  in~\cite[Sections 1 and 4 and Lemma 3.3]{FriedlTeichner}.
\end{itemize}
\end{remark}

\subsection{Uniqueness and classification}

We now return to the set~$\mathcal{D}_G(K)$ of rel. \ boundary topological ambient isotopy classes of~$G$-ribbon discs with boundary~$K$.
In fact, we will mostly be concerned with the subset $\mathcal{D}^a_G(K) \subset \mathcal{D}_G(K)$ of discs with aspherical exteriors.
To that effect,  inspired by~\cite[Definition~1.2]{HambletonKreckTeichner}, we describe some assumptions on the group~$G$ that we will require.
\begin{definition}
\label{def:WAA}
A group~$G$ \emph{satisfies properties W-AA} if 
\begin{enumerate}
\item[\mylabel{item:W}{(W)}]the Whitehead group~$\operatorname{Wh}(G)$ vanishes; 
\item[\mylabel{item:A4}{(A4)}] the assembly map~$A_4 \colon  H_4(BG;\mathbf{L}_\bullet) \to L_4(\Z[G])$ is an isomorphism;\footnote{In the work of Hambleton, Kreck and Teichner~\cite{HambletonKreckTeichner} W-AA only requires $A_4$ to be injective.}
\item[\mylabel{item:A5}{(A5)}] the assembly map~$A_5 \colon H_5(BG;\mathbf{L}_\bullet) \to L_5(\Z[G])$ is surjective.
\end{enumerate}
\end{definition}
We will mostly use these conditions as a blackbox,  but note that thanks to extensive work on the Farrell-Jones conjecture (see~\cite{LueckIsomorphism} for a survey) they should not be thought of as insurmountable restrictions.
We discuss all of this in more detail in Remark~\ref{rem:Classification} below and refer to~\cite{RanickiTopological,ChangWeinberger,LueckAssembly,LueckIsomorphism} for background on assembly maps in $L$-theory.
Returning to our aim of describing~$\mathcal{D}_G(K)$,   we consider the set 
\begin{equation}
\label{eq:Epi}
 \operatorname{Epi}^{FT}(\pi_1(M_K),G):=\{ \varphi \colon \pi_1(M_K) \to G \ | \ \varphi \text{ is an epimorphism that satisfies \eqref{eq:FT}}  \}
  \tag{Epi}
 \end{equation}
and observe that it is acted upon (by postcomposition) by the group~$\operatorname{Aut}(G)$ of automorphisms of~$G$.
Thanks to the discussion leading up to Theorem~\ref{thm:FT}, note that sending a~$G$-ribbon disc with aspherical exterior to an epimorphism~$\pi_1(M_K) \stackrel{}{\twoheadrightarrow} \pi_1(N_D) \cong G$ defines a map
$$\Phi \colon \mathcal{D}_G^a(K) \to \operatorname{Epi}^{FT}(\pi_1(M_K),G)/\operatorname{Aut}(G)$$
which does not depend on the  choice of the isomorphism $\pi_1(N_D) \cong G$.
If $G$ is a good geometrically~$2$-dimensional ribbon group such that $\widetilde{L}_4(\Z[G])=0$,  then Theorem~\ref{thm:FT} ensures that $\Phi$ is surjective.
Our main technical result gives conditions on $G$ for $\Phi$ to be injective and,  in the absence of the goodness condition on $G$,  for exteriors of $G$-ribbon discs to be s-cobordant rel.\ boundary.


\begin{theorem}
\label{thm:Injective}
Let~$K$ be a knot and let~$G$ be a geometrically $2$-dimensional group that satisfies~\ref{item:W} and~\ref{item:A5}.
If $D_1$ and $D_2$ are two $G$-ribbon discs with aspherical exteriors and boundary~$K$ such that~$\Phi(D_1)=\Phi(D_2)$,  then
the disc exteriors $N_{D_1}$ and $N_{D_2}$ are s-cobordant rel.\ boundary.

If in addition to these conditions the group $G$ is good, then the discs $D_1$ and $D_2$ are ambiently isotopic rel.\ boundary.
\end{theorem}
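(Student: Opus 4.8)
The plan is to promote the hypothesis $\Phi(D_1)=\Phi(D_2)$ to an actual homeomorphism of disc exteriors rel.\ boundary and then to convert such a homeomorphism into an ambient isotopy. For the final conversion, recall that a homeomorphism $N_{D_2}\to N_{D_1}$ restricting to the identity on $\partial N_{D_i}=M_K$ preserves the meridian of $K$, hence extends across the tubular neighborhoods $\nu D_i\cong D^2\times D^2$ to a homeomorphism of pairs $(D^4,D_2)\to(D^4,D_1)$ that is the identity on $S^3$; the Alexander trick then isotopes this self-homeomorphism of $D^4$ to the identity rel.\ $S^3$, and tracking $D_1$ along the isotopy exhibits the desired ambient isotopy rel.\ boundary. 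Everything thus reduces to producing a homeomorphism $N_{D_2}\to N_{D_1}$ that is the identity on $M_K$.

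First I would build a homotopy equivalence rel.\ boundary. As $D_1$ and $D_2$ have aspherical exteriors, we have $N_{D_i}\simeq K(G,1)$, and the boundary inclusions $j_i\colon M_K\hookrightarrow N_{D_i}$ induce the epimorphisms $\varphi_i\colon\pi_1(M_K)\twoheadrightarrow\pi_1(N_{D_i})\cong G$ representing $\Phi(D_i)$. The equality $\Phi(D_1)=\Phi(D_2)$ provides $\theta\in\operatorname{Aut}(G)$ with $\theta\varphi_1=\varphi_2$; composing the identification $\pi_1(N_{D_2})\cong G$ with $\theta$ and with $\pi_1(N_{D_1})\cong G$ yields an isomorphism $\pi_1(N_{D_2})\to\pi_1(N_{D_1})$ compatible with the two boundary maps. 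By asphericity this isomorphism is realised by a homotopy equivalence $h\colon N_{D_2}\to N_{D_1}$ with $h\circ j_2\simeq j_1$, and after a homotopy supported in a boundary collar we may assume $h|_{M_K}=\operatorname{id}_{M_K}$. Since $\operatorname{Wh}(G)=0$ by~\ref{item:W}, this $h$ is automatically a simple homotopy equivalence, so it defines an element $[N_{D_2},h]\in\mathcal{S}^{\mathrm{TOP}}_\partial(N_{D_1})$ of the rel.\ boundary topological structure set.

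The heart of the argument is to show $\mathcal{S}^{\mathrm{TOP}}_\partial(N_{D_1})$ is a single point. Since $G$ is good, topological surgery is available in dimension four by Freedman's work, so we may use the surgery exact sequence
\[
 L_5(\Z[G]) \longrightarrow \mathcal{S}^{\mathrm{TOP}}_\partial(N_{D_1}) \xrightarrow{\ \eta\ } H_4(N_{D_1};\mathbf{L}_\bullet) \xrightarrow{\ A_4\ } L_4(\Z[G]),
\]
identifying the normal invariants with $H_4(N_{D_1};\mathbf{L}_\bullet)\cong H_4(BG;\mathbf{L}_\bullet)$ via $N_{D_1}\simeq BG$ and the two rightmost maps with the assembly maps of Definition~\ref{def:WAA}. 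Surjectivity of $A_5$ from~\ref{item:A5} forces the image of $L_5(\Z[G])\to\mathcal{S}^{\mathrm{TOP}}_\partial(N_{D_1})$ to be trivial, so $\eta$ is injective, and it suffices to check that $A_4$ is injective. Here geometric $2$-dimensionality enters: the Atiyah--Hirzebruch spectral sequence $H_p(BG;L_q(\Z))\Rightarrow H_{p+q}(BG;\mathbf{L}_\bullet)$ has nonzero columns only for $p\le 2$, and since a slice disc exterior is a homology circle, asphericity gives $H_2(BG;\Z)\cong H_2(N_{D_1};\Z)=0$, whence $H_2(BG;\Z/2)=0$ as $H_1(BG;\Z)=\Z$ is torsion-free. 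The only surviving contribution in total degree four is $H_0(BG;L_4(\Z))=L_4(\Z)=\Z$, so $H_4(BG;\mathbf{L}_\bullet)\cong\Z$ and $A_4$ restricts there to the change-of-rings map $L_4(\Z)\to L_4(\Z[G])$, which is split injective by the augmentation $\Z[G]\to\Z$. Hence $\ker A_4=0$, and with the injectivity of $\eta$ we conclude $\mathcal{S}^{\mathrm{TOP}}_\partial(N_{D_1})=\{\ast\}$.

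Consequently $[N_{D_2},h]=[\operatorname{id}]$, yielding a homeomorphism $N_{D_2}\to N_{D_1}$ restricting to the identity on $M_K$; feeding this into the reduction of the first paragraph completes the proof. I expect the main obstacle to be the geometric reduction rather than the surgery computation: one must check with care that a boundary-fixing homeomorphism of exteriors extends compatibly over the tubular neighborhoods (respecting the meridian and $0$-framing) to a homeomorphism of the pairs $(D^4,D_i)$, and that the rel.\ boundary homotopy equivalence $h$ can genuinely be arranged to restrict to the identity on $M_K$ so as to represent a bona fide element of the structure set. The surgery step, by contrast, is clean once one observes that geometric $2$-dimensionality together with $H_2(G)=0$ collapses the normal invariants to a single copy of $\Z$ on which $A_4$ is automatically injective; this is precisely why only~\ref{item:W} and~\ref{item:A5}, and not the injectivity of $A_4$, are required.
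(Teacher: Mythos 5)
Your proposal is correct and follows essentially the same route as the paper: reduce to a rel.\ boundary homeomorphism of exteriors via the Alexander trick, use asphericity to produce a homotopy equivalence extending $\id_{M_K}$, and then run the rel.\ boundary surgery exact sequence (valid since $G$ is good), with \ref{item:A5} giving surjectivity of the odd-dimensional obstruction map and the $2$-dimensionality of $BG$ collapsing the normal invariants to a single $\Z$. The only cosmetic difference is that you kill the normal invariant of the homotopy equivalence by observing that $\sigma_4$ is injective on $\mathcal{N}\cong\Z$ (the split injection $L_4(\Z)\to L_4(\Z[G])$), whereas the paper computes it directly as a signature difference --- these are the same calculation in different packaging.
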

\color{black}

We note that this result can alternatively be stated with normal subgroups instead of epimorphisms as this is easier to verify in practice.
To state this concisely,  given a slice disc $D$ for a knot~$K$, we use $\iota_D \colon \pi_1(M_K) \to \pi_1(N_D)$ to denote the inclusion induced map.


\begin{corollary}
\label{cor:Subgroup}
Let~$K$ be a knot and let~$G$ be a geometrically $2$-dimensional group that satisfies~\ref{item:W} and~\ref{item:A5}.
If $D_1$ and $D_2$ are two $G$-ribbon discs with aspherical exteriors and boundary~$K$ such that~$\ker(\iota_{D_1})=\ker(\iota_{D_2})$,  then the disc exteriors $N_{D_1}$ and $N_{D_2}$ are s-cobordant rel.\ boundary.

If in addition to these conditions the group $G$ is good, then the discs $D_1$ and $D_2$ are 
ambiently isotopic rel.\ boundary.
\end{corollary}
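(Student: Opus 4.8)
The plan is to deduce the corollary directly from Theorem~\ref{thm:Injective} by translating the hypothesis on kernels into the hypothesis $\Phi(D_1)=\Phi(D_2)$. The key observation is purely group-theoretic. For a $G$-ribbon disc $D$ with aspherical exterior, the class $\Phi(D)$ is represented by any composition $\varphi_D = \psi \circ \iota_D \colon \pi_1(M_K) \twoheadrightarrow \pi_1(N_D) \xrightarrow{\cong} G$, where $\psi$ is a choice of isomorphism $\pi_1(N_D) \cong G$. Since $\psi$ is an isomorphism, $\ker(\varphi_D) = \ker(\iota_D)$; in particular the normal subgroup $\ker(\iota_D) \subseteq \pi_1(M_K)$ is intrinsic to $D$ and does not depend on the choice of $\psi$, which is precisely what makes this reformulation convenient in practice.

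First I would record the elementary fact that two epimorphisms $\varphi_1, \varphi_2 \colon \pi_1(M_K) \twoheadrightarrow G$ have the same kernel if and only if they differ by an automorphism of $G$, i.e.\ $\varphi_2 = \alpha \circ \varphi_1$ for some $\alpha \in \Aut(G)$. Indeed, if $\ker(\varphi_1) = \ker(\varphi_2) =: N$, then both factor through isomorphisms $\overline{\varphi}_i \colon \pi_1(M_K)/N \xrightarrow{\cong} G$, and $\alpha := \overline{\varphi}_2 \circ \overline{\varphi}_1^{-1}$ is the required automorphism; the converse is immediate since automorphisms preserve kernels.

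Combining these two points, the hypothesis $\ker(\iota_{D_1}) = \ker(\iota_{D_2})$ is equivalent to the assertion that the representatives $\varphi_{D_1}, \varphi_{D_2}$ have equal kernel, hence differ by an element of $\Aut(G)$, hence define the same class in $\operatorname{Epi}^{FT}(\pi_1(M_K),G)/\Aut(G)$; that is, $\Phi(D_1)=\Phi(D_2)$. Here both $\varphi_{D_i}$ automatically lie in $\operatorname{Epi}^{FT}(\pi_1(M_K),G)$ by the discussion preceding Theorem~\ref{thm:FT}, since the $D_i$ are $G$-ribbon discs with aspherical exterior, so this is exactly the hypothesis of Theorem~\ref{thm:Injective}. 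That theorem then yields that $D_1$ and $D_2$ are ambiently isotopic rel.\ boundary, as desired.

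I do not expect a substantive obstacle: the corollary is a formal reformulation of Theorem~\ref{thm:Injective}, and its only content is the group-theoretic equivalence between having the same kernel and lying in the same $\Aut(G)$-orbit. The single point requiring care is to confirm that $\ker(\iota_D)$ is genuinely well defined independently of the identification $\pi_1(N_D)\cong G$, which is handled by the opening observation and is what allows the hypothesis to be phrased in terms of normal subgroups of $\pi_1(M_K)$ alone.
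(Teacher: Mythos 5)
Your proposal is correct and matches the paper's (implicit) argument: the corollary is stated as an immediate reformulation of Theorem~\ref{thm:Injective}, resting exactly on the observation that two epimorphisms onto $G$ have the same kernel if and only if they differ by an automorphism of $G$, so that $\ker(\iota_{D_1})=\ker(\iota_{D_2})$ is equivalent to $\Phi(D_1)=\Phi(D_2)$. Your care in checking that $\ker(\iota_D)$ is independent of the identification $\pi_1(N_D)\cong G$ is exactly the point the paper alludes to when calling this formulation ``easier to verify in practice.''
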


For smoothly embedded discs, the hypotheses of these results can be relaxed.
\begin{remark}
\label{rem:Geom2DSmooth}
If $D_1$ and $D_2$ are ribbon discs with aspherical exteriors and $\pi_1(N_{D_i}) \cong G$ for $i=1,2$,  then the assumption that~$G$ be geometrically $2$-dimensional can be omitted in both Theorem~\ref{thm:Injective} and Corollary~\ref{cor:Subgroup}: in this case $K(G,1) \simeq N_{D_i}$ has the homotopy type of a 2-complex.
\end{remark}

Combining Theorems~\ref{thm:FT} and~\ref{thm:Injective}, we obtain an answer to Question~\ref{question} provided we make some restrictions on the ribbon group $G$ and require the ribbon disc exteriors to be aspherical.

\begin{theorem}
\label{thm:Classification}
Let~$K \subset S^3$ be a knot and let~$G$ be a geometrically~$2$-dimensional good ribbon group that satisfies properties W-AA.
Mapping a~$G$-ribbon disc~$D$ to the epimorphism~$\pi_1(M_K) \stackrel{}{\twoheadrightarrow} \pi_1(N_D) \cong G$
defines a bijection $\Phi$ between the two following sets:
\begin{enumerate}
\item the set~$\mathcal{D}^a_G(K)$ of~$G$-ribbon discs  with aspherical exterior and boundary~$K$,  considered up to ambient isotopy rel.\ boundary;
\item the set~$\operatorname{Epi}^{\operatorname{FT}}(\pi_1(M_K),G)/\operatorname{Aut}(G)$ defined in~\eqref{eq:Epi}.
\end{enumerate}
\end{theorem}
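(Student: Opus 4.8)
The plan is to deduce the bijectivity of $\Phi$ by combining the two principal results already available: Theorem~\ref{thm:FT} supplies surjectivity and Theorem~\ref{thm:Injective} supplies injectivity, so the work consists mainly in checking that the hypotheses of the present statement feed correctly into each. First I would recall that $\Phi$ is well defined. This was established in the discussion preceding Theorem~\ref{thm:FT}: a $G$-ribbon disc $D$ with aspherical exterior yields an epimorphism $\pi_1(M_K) \twoheadrightarrow \pi_1(N_D) \cong G$ satisfying~\eqref{eq:FT}, the ambiguity in the chosen isomorphism $\pi_1(N_D) \cong G$ is absorbed by passing to the $\Aut(G)$-orbit, and ambiently isotopic discs visibly induce the same orbit. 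Hence $\Phi \colon \mathcal{D}^a_G(K) \to \operatorname{Epi}^{FT}(\pi_1(M_K),G)/\Aut(G)$ is a well-defined map of sets.

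For surjectivity I would invoke Theorem~\ref{thm:FT}: given a class with representative $\varphi \in \operatorname{Epi}^{FT}(\pi_1(M_K), G)$, that theorem produces a $G$-ribbon disc $D$ with aspherical exterior and boundary $K$ for which $\pi_1(M_K) \twoheadrightarrow \pi_1(N_D) \cong G$ agrees with $\varphi$, so that $\Phi(D) = [\varphi]$. The only hypothesis of Theorem~\ref{thm:FT} not literally among W-AA is the vanishing $\widetilde{L}_4^h(\Z[G]) = 0$, so the substantive point is to extract this from property~\ref{item:A4}. Here I would use that $G$ is a geometrically $2$-dimensional ribbon group, so that $BG$ is a $2$-complex with $H_1(G) = \Z$ and $H_2(G) = 0$ (cf.\ Remark~\ref{rem:FT}). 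The basepoint inclusion splits off $L_4(\Z) = H_4(\mathrm{pt}; \mathbf{L}_\bullet)$ as a direct summand of $H_4(BG; \mathbf{L}_\bullet)$, and the Atiyah--Hirzebruch spectral sequence $H_p(BG; L_q(\Z)) \Rightarrow H_{p+q}(BG; \mathbf{L}_\bullet)$ shows the complementary reduced summand $\widetilde{H}_4(BG; \mathbf{L}_\bullet)$ vanishes: over a $2$-complex the only potentially nonzero reduced contribution in total degree $4$ is $H_2(BG; \Z/2)$, which is $0$ by universal coefficients since $H_2(G)=0$ and $H_1(G)=\Z$ is torsion-free. As \ref{item:A4} identifies $A_4$ with an isomorphism carrying this splitting to the image of $L_4(\Z)$ in $L_4(\Z[G])$, we conclude $\widetilde{L}_4^h(\Z[G]) = 0$, and Theorem~\ref{thm:FT} applies.

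Injectivity is then exactly Theorem~\ref{thm:Injective}, whose hypotheses---that $G$ be a geometrically $2$-dimensional good group satisfying~\ref{item:W} and~\ref{item:A5}---are all contained in the assumptions of the present statement, and which asserts that two $G$-ribbon discs with aspherical exteriors having the same image under $\Phi$ are ambiently isotopic rel.\ boundary. Together the three steps show that $\Phi$ is a bijection between $\mathcal{D}^a_G(K)$ and $\operatorname{Epi}^{FT}(\pi_1(M_K),G)/\Aut(G)$. I expect the main obstacle in this assembly to be not geometric but the $L$-theoretic reconciliation in the surjectivity step, namely matching the reduced $L$-group hypothesis of Theorem~\ref{thm:FT} against the assembly-map formulation of W-AA; all genuine topological content is already packaged inside Theorems~\ref{thm:FT} and~\ref{thm:Injective}.
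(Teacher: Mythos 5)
Your proposal is correct and takes essentially the same route as the paper: surjectivity via Theorem~\ref{thm:FT}, injectivity via Theorem~\ref{thm:Injective}, with the only substantive work being the reconciliation of~\ref{item:A4} with the hypothesis~$\widetilde{L}_4(\Z[G])=0$, which the paper isolates as Remark~\ref{rem:A4}. Your Atiyah--Hirzebruch computation of~$H_4(BG;\mathbf{L}_\bullet)\cong L_4(\Z)$ carried out directly on~$BG$ (using~$H_1(G)=\Z$ and~$H_2(G)=0$) together with compatibility of the assembly map with the basepoint splitting is a mild streamlining of the paper's version, which routes the same spectral sequence argument through the normal invariants of a smooth ribbon disc exterior; the two arguments are the same in substance.
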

\begin{proof}
We argue in Remark~\ref{rem:A4} below that since $G$ is a geometrically~$2$-dimensional ribbon group with $\operatorname{Wh}(G)=0$,
requiring~$G$ to satisfy condition~\ref{item:A4} is equivalent to asking for~$\widetilde{L}_4(\Z[G])=0$.
Thus the hypotheses of Theorem~\ref{thm:FT} are satisfied and so~$\Phi$ is surjective.
The injectivity of~$\Phi$ follows from Theorem~\ref{thm:Injective} which we can apply since~$G$ satisfies properties W-AA.
\end{proof}

\begin{remark}
\label{rem:Classification}
We collect a couple of remarks on this result.
\begin{itemize}
\item If the ribbon group conjecture (or more optimistically the Whitehead conjecture) were true, then requiring $G$ to be geometrically~$2$-dimensional would be superfluous; recall Remark~\ref{rem:Whitehead}.
It is also tempting to conjecture that exteriors of $G$-ribbon discs are aspherical and in this case we would have~$\mathcal{D}^a_G(K)=\mathcal{D}_G(K)$.
This latter conjecture holds when~$G$ is PTFA~\cite[Lemma 2.1]{ConwayPowellDiscs} (e.g. when $G=\Z$ and $G=BS(1,2)$) and is a consequence of the Whitehead conjecture if the disc exterior is homotopy equivalent to a $2$-complex.
%
\item The groups~$\Z$ and~$BS(1,2)$ satisfy the hypotheses of Theorem~\ref{thm:Classification} and in this case,  unpacking the definition of~$\operatorname{Epi}^{\operatorname{FT}}(\pi_1(M_K),G)/\operatorname{Aut}(G)$ recovers~\cite[Theorems~1.5 and~1.6]{ConwayPowellDiscs}.
Instead of repeating those statements,  we note that for~$G=\Z$,~$\operatorname{Epi}^{\operatorname{FT}}(\pi_1(M_K),G)/\operatorname{Aut}(G)$ has at most one element, while for~$G=BS(1,2)$ it has at most~$2$~\cite[Section 4]{ConwayPowellDiscs}.
Estimating the cardinality of this set in general appears to be more challenging.
Naturally,  the set~$\mathcal{D}_G(K)$ is often empty: for example, we refer to~\cite[Corollary 3.4]{FriedlTeichner} for an obstruction (based on the Alexander polynomial) to a knot $K$ bounding a $G$-ribbon disc. 
%
\item As we alluded to in Corollary~\ref{cor:Subgroup},  the classification result of Theorem~\ref{thm:Classification} can be stated in terms of normal subgroups of~$\pi_1(M_K)$ instead of epimorphisms originating from~$\pi_1(M_K)$: to a~$G$-ribbon disc $D$, one associates the normal subgroup $\ker(\pi_1(M_K) \twoheadrightarrow \pi_1(N_D))$ of $\pi_1(M_K)$.
This was the perspective taken in~\cite{ConwayPowellDiscs} where, using that~$BS(1,2)$ is metabelian,  the results were then formulated using submodules of the Alexander module~$H_1(M_K;\Z[t^{\pm 1}])$; the details are in~\cite[Section 3]{ConwayPowellDiscs}.
%
\item The requirement that the group be good is hard to verify in practice.
On the other hand~$G$ satisfies property W-AA if it is geometrically~$2$-dimensional and satisfies the Farrell-Jones conjecture: if a group~$G$ is geometrically~$2$-dimensional, then~$K(G,1)$ is a~$2$-complex and the claim now follows as in~\cite[Lemma 2.3]{KasprowskiLand} (the core of the argument will be recalled both in the proof of Theorem~\ref{thm:Injective} and in Remark~\ref{rem:A4}).
We treat the Farrell-Jones conjecture as a blackbox, but refer the interested reader to~\cite{LueckIsomorphism} for a survey and to~\cite[Chapter~15]{LueckIsomorphism} for a list of groups for which the conjecture is known to hold.
\end{itemize}
\end{remark}

\begin{example}
\label{ex:KnotGroup}
We argue that the group~$G(J)=\pi_1(S^3 \setminus J)$ of a classical knot~$J \subset S^3$ is a geometrically~$2$-dimensional ribbon group that satisfies properties W-AA.
Thus Theorem~\ref{thm:Injective} provides a criterion for exteriors of $G(J)$-ribbon discs to be s-cobordant rel.\ boundary and, if~$G(J)$ is additionally assumed to be good, then Theorem~\ref{thm:Classification} classifies~$G(J)$-ribbon discs for~$J \# -J$.

The group of~$J \subset S^3$ is ribbon (the ribbon knot~$J \# -J$ bounds a smoothly embedded  ribbon disc with group $G(J)$ as explained in~\cite[page 2135]{FriedlTeichner}).
The sphere theorem ensures that~$G(J)$ is geometrically~$2$-dimensional (the knot exterior is aspherical and has the homotopy type of a 2-complex; see e.g.~\cite[Theorem 11.7]{LickorishIntroduction}).
The Farrell-Jones conjecture holds for~$G(J)$ because it holds for the fundamental group of any~$3$-manifold with boundary~\cite[Theorem~15.1~(e)]{LueckIsomorphism}.

Since knot groups are PTFA by work of Strebel~\cite{Strebel},~$G(J)$-ribbon discs are aspherical by~\cite[Lemma~2.1]{ConwayPowellDiscs} and thus~$\mathcal{D}_{G(J)}(J \# -J)=\mathcal{D}^a_{G(J)}(J \# -J)$.
Finally,  as we noted in Remark~\ref{rem:FT}, since $G(J)$ is PTFA, we can use condition~\eqref{eq:Ext} instead of condition~\eqref{eq:FT}.
\end{example}
\begin{example}
\label{ex:BS}
We argue that for~$m,n \in \Z$ with~$|m-n|=1$, the Baumslag-Solitar group~$BS(m,n)$  is a geometrically~$2$-dimensional ribbon group that satisfies properties W-AA.
Thus Theorem~\ref{thm:Injective} provides a criterion for exteriors of aspherical $BS(m,n)$-ribbon discs to be s-cobordant rel.\ boundary and, if~$BS(m,n)$ is additionally assumed to be good, then Theorem~\ref{thm:Classification} classifies~$BS(m,n)$-ribbon discs with aspherical exteriors.


The fact that~$BS(m,n)$ is ribbon when~$|m-n|=1$ can be seen by looking at the handle diagram depicted in Figure~\ref{fig:RibbonBS}.
Baumslag-Solitar groups are geometrically~$2$-dimensional: the universal cover of the presentation~$2$-complex for~$\langle a,b \ | \ ba^{m}b^{-1}=b^n \rangle$ is homeomorphic to the product of~$\R$ with a tree; see e.g.~\cite[Section~2]{FredenKnudsonSchofield}.
Additionally, every Baumslag-Solitar group~$BS(m,n)$ satisfies the Farrell-Jones conjecture~\cite{FarrellWu,GandiniMeinertRuping}.
\end{example}

\begin{figure}[!htbp]
\centering
\includegraphics[scale=0.5]{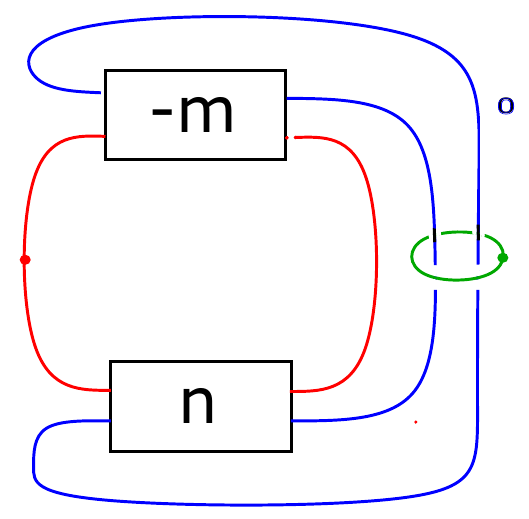}
\caption{Assuming that~$|m-n|=1$,  this figure depicts a handle diagram of a ribbon disc exterior with fundamental for~$BS(m,n)$.
Indeed, since~$|m-n|=1$, the red and blue knots form a handle diagram for~$D^4$ in which the green knot is sliced by a ribbon disc~$D$ with~$\pi_1(N_D)=BS(m,n)$.}
\label{fig:RibbonBS}
\end{figure}

We conclude with a brief final remark concerning asphericity.
The methods of this paper rely heavily on~$G$-ribbon disc exteriors (conjecturally) being aspherical.
Currently, non-aspherical~$4$-manifolds with boundary~$M_K$ and fundamental group~$G$ are poorly understood beyond the group~$G=\Z$~\cite{ConwayPowell}.
This is the reason why we only work in~$D^4$ instead of in other~$4$-manifolds.

\subsection*{Acknowledgments}
I wish to thank Daniel Kasprowski and Markus Land for insightful correspondence related to~\cite{KasprowskiLand} and for helpful comments on a draft of this paper.
I am also grateful to Lisa Piccirillo for explaining to me why~$BS(m,n)$ is ribbon when~$|m-n|=1$ and to Jonathan Hillman for pointing me towards~\cite[Corollary 2.6.1]{Hillman}.
Finally,  thanks also go to anonymous referees for helpful comments and suggestions.

\subsection*{Conventions}
Throughout this article, we work in the topological category. 
Manifolds are assumed to be compact and oriented.
Homeomorphisms, homotopy equivalences and isotopies are \emph{rel.\ boundary} if they fix the boundary pointwise.
If~$M_1,M_2$ are two~$n$-manifolds with boundary~$Y$, a cobordism between~$M_1$ and~$M_2$ is \emph{relative~$Y$} if, when restricted to~$Y$, it is the product~$Y \times [0,1]$.

\section{Proof of the main technical result}

We recall the statement of Theorem~\ref{thm:Injective} and prove it.
Let~$K$ be a knot and let~$G$ be a geometrically $2$-dimensional 
group that satisfies~\ref{item:W} and~\ref{item:A5}.
The aim is to prove that if~$D_1$ and~$D_2$ are two~$G$-ribbon discs with aspherical exteriors and boundary $K$ such that $\Phi(D_1)=\Phi(D_2) \in \operatorname{Epi}^{FT}(\pi_1(M_K),G)/\operatorname{Aut}(G)$, then the disc exteriors $N_{D_1}$ and $N_{D_2}$ are s-cobordant rel. boundary and, if $G$ is additionally assumed to be good, then~$D_1$ and~$D_2$ are 
ambiently isotopic rel.\ boundary.

\begin{proof}[Proof of Theorem~\ref{thm:Injective}]
Assume that~$D_1$ and~$D_2$ are two~$G$-ribbon discs with aspherical exteriors and boundary~$K$ and that their epimorphisms agree in~$\operatorname{Epi}^{FT}(\pi_1(M_K),G)/\operatorname{Aut}(G)$.
We must show that the exteriors~$N_{D_1}$ and~$N_{D_2}$ are  s-cobordant rel.\ boundary. 
If we additionally assume that $G$ is good,  then Freedman's~$5$-dimensional relative~$s$-cobordism theorem will then ensure that~$N_{D_1}$ and~$N_{D_2}$ are in fact homeomorphic rel.\ boundary. 
The fact that~$D_1$ and~$D_2$ are ambiently isotopic rel.\ boundary follows by applying Alexander's trick, as noted in~\cite[Lemma 2.5]{ConwayPowellDiscs}.
Our strategy decomposes into two steps.
The first step uses the conditions on the epimorphisms to show that~$\id_{M_K}$ extends to a homotopy equivalence~$N_{D_1} \simeq N_{D_2}$.
The second step uses surgery theory to improve this homotopy equivalence to
an s-cobordism
rel.\ boundary; here is where we rely on properties~\ref{item:W} and~\ref{item:A5} as well as on the fact that~$G$ is good.

We start with the first step.
Since the epimorphisms of~$D_1$ and~$D_2$ agree,  there exists an automorphism~$\Psi$ of~$G$ that makes the following diagram commute:
$$
\xymatrix@R0.5cm{
\pi_1(M_K) \ar@{->>}[d]_{\iota_{D_1}}\ar[r]^=& \pi_1(M_K)\ar@{->>}[d]^{\iota_{D_2}} \\
\pi_1(N_{D_1})\ar[d]_\cong & \pi_1(N_{D_2}) \ar[d]^\cong \\
G \ar[r]^-{\Psi,\cong}&G.
}
$$
Since the bottom vertical maps in this diagram are isomorphisms,  we deduce that there exists an isomorphism~$g \colon \pi_1(N_{D_1})\cong \pi_1(N_{D_2})$ such that~$g \circ \iota_{D_1}  =\iota_{D_2}$; such isomorphisms were called  \emph{compatible} in~\cite[Section 2]{ConwayPowellDiscs}.
As the $D_i$ have aspherical exteriors, the obstruction theory argument from~\cite[end of proof of Lemma 2.1]{ConwayPowellDiscs} shows that the identity~$\id_{M_K} \colon M_K \to M_K$ extends to a homotopy equivalence~$f \colon N_{D_1} \to N_{D_2}$ which induces~$g$ on fundamental groups.

We now move on to the second step: we use surgery theory to improve the homotopy equivalence~$f$ to an s-cobordism~$N_{D_1} \cong_{\operatorname{s-cob}} N_{D_2}$ rel.\ boundary.
We describe the argument very briefly for readers that are familiar with surgery theory before giving some more details.
Consider the surgery sequence,  where we can ignore decorations thanks to condition~\ref{item:W}:
$$  \mathcal{N}(N_{D_2} \times [0,1],\partial (N_{D_2} \times [0,1])) \xrightarrow{\sigma_5} L_{5}(\Z[G]) \to \mathcal{S}(N_{D_2},\partial N_{D_2}) \xrightarrow{\eta} \mathcal{N}(N_{D_2},\partial N_{D_2}) \xrightarrow{\sigma_4} L_4(\Z[G]).$$
We use that disc exteriors have trivial~$H_2$ to deduce that $\eta$ is the zero map.
More concretely,  
we obtain a degree one normal map
\begin{equation}
\label{eq:2ConnectedDONM}
(F',f,\id_{N_{D_2}}) \colon (W',N_{D_1},N_{D_2}) \to (N_{D_2} \times [0,1],N_{D_2},N_{D_2})
\end{equation}
that we can assume to be~$2$-connected by surgery below the middle dimension.
We then use property~\ref{item:A5} and the fact that $G$ is geometrically $2$-dimensional to deduce that~$\sigma_5$ is surjective.
We infer that~$N_{D_1}$ and~$N_{D_2}$ are~$s$-cobordant either by appealing to the exactness of the surgery sequence (which requires $G$ to be good) or by using the surjectivity of~$\sigma_5$ to
replace~$F'$ by another degree one normal map with vanishing surgery obstruction (despite being slightly longer, this argument has the advantage of not requiring $G$ to be good).
Thus the fact that $N_{D_1}$ and $N_{D_2}$ are $s$-cobordant rel.\ boundary can be proved without using that $G$ is good.
The homeomorphism classification result then follows from Freedman's~$5$-dimensional relative~$s$-cobordism theorem which we can apply 
if~$G$ is good.
%

We give more details.
The set~$\mathcal{N}(N_{D_2},\partial N_{D_2})$ consists of equivalences classes of degree one normal maps~$M \to N_{D_2}$ that restrict to a homeomorphism on the boundary.
Two such degree one normal maps~$f_i \colon  M_i \to N_{D_2}$ for~$i=1,2$ are equivalent if there exists a rel.\ boundary cobordism~$(W,M_1,M_2)$ and a degree one normal map 
$$(W,M_1,M_2) \to (N_{D_2} \times [0,1],N_{D_2},N_{D_2})$$ 
that restricts to~$f_i$ on~$M_i$ for~$i=1,2$.
A homotopy equivalence~$h \colon M \to N_{D_2}$ rel.\ boundary is in particular a degree one normal map that we denote by~$\eta(h) \in \mathcal{N}(N_{D_2},\partial N_{D_2})$.

We claim that $\eta$ is the zero map.
Under the isomorphism
\begin{equation}
\label{eq:NormalInvariants}
\mathcal{N}(N_{D_2},\partial N_{D_2}) \cong H^4(N_{D_2},\partial N_{D_2}) \oplus H^2(N_{D_2},\partial N_{D_2};\Z_2)=H^4(N_{D_2},\partial N_{D_2}) \cong \Z
\end{equation}
we have~$\eta(h)=\frac{1}{8}(\sigma(M)-\sigma(N_{D_2})$; this fact is well known to surgeons but we refer to~\cite[Proposition  2.2]{ConwayPowellDiscs} in case the reader is curious about the details.
Since the signature of a disc exterior vanishes and $h$ is a homotopy equivalence,  we deduce that $\eta(h)=0$, as claimed.



We assert that the map~$\sigma_5$ from the surgery  sequence is surjective.
This relies on surgery spectra and the algebraic theory of surgery. We treat this largely as a blackbox but note that this part of surgery theory was developed by Quinn~\cite{QuinnGeometricFormulation,QuinnSurgeryObstruction} and Ranicki~\cite{RanickiTotal,RanickiExact}; we also refer to~\cite[Section 4.4]{ChangWeinberger} for a nice overview of these topics and to~\cite[Section 4]{CenceljMuranovRepovs} for a helpful account of the rel.\ boundary case.
Using the relation between the assembly map and the surgery  obstruction (as mentioned for example in~\cite[page 158]{ChangWeinberger}) and the fact that~$N_{D_2}$ is a~$K(G,1)$, the following diagram commutes:
$$ 
\xymatrix@R0.5cm{
 \mathcal{N}(N_{D_2} \times [0,1],\partial (N_{D_2} \times [0,1])) \ar[rr]^-{\sigma_5}\ar[d]^\cong&& L_{5}(\Z[G])\ar[d]^= \\
H_5(N_{D_2};\mathbf{L}\langle 1 \rangle_\bullet) \ar[r]^\cong \ar[d]^\cong&H_5(N_{D_2};\mathbf{L}_\bullet) \ar[r]^-{}\ar[d]^\cong& L_5(\Z[G])\ar[d]^=\\  
H_5(BG;\mathbf{L}\langle 1 \rangle_\bullet) \ar[r]^\cong&H_5(BG;\mathbf{L}_\bullet) \ar[r]^-{A_5}& L_5(\Z[G]).
  }
$$
Here~$\mathbf{L}_\bullet$ denotes the~$L$-theory spectrum of the  integers and~$\mathbf{L}\langle 1 \rangle_\bullet$ denotes its~$1$-connective cover.
The fact that~$H_5(BG;\mathbf{L}\langle 1 \rangle_\bullet) \to H_5(BG;\mathbf{L}_\bullet)$ is an isomorphism follows because~$K(G,1)$ admits a~$2$-dimensional CW-model (the Atiyah-Hirzebruch spectral sequence argument is the same as in~\cite[proof of Lemma 2.3]{KasprowskiLand}) and the fact that the top left vertical map is an isomorphism is a fact from algebraic surgery theory; see e.g.~\cite[Equation (27)]{CenceljMuranovRepovs}.
Using this commutative diagram and property~\ref{item:A5} (which stipulates that the assembly map~$A_5$ is surjective), one deduces that~$\sigma_5$ is surjective.

There are now two closely related ways to conclude that $N_{D_1}$ and~$N_{D_2}$ are s-cobordant rel.\ boundary.
The first way is shorter but uses that the group $G$ is good: since $\eta$ is the zero map,
$\sigma_5$ is surjective and the surgery sequence is exact (because $G$ is good), the structure set~$\mathcal{S}(N_{D_2},\partial N_{D_2})$ (to which $f$ belongs) is trivial.
The second argument (inspired by~\cite{KasprowskiLand}) is slightly longer but does not require that the group $G$ be good:
since $\eta \equiv 0$,  there is a rel.\ boundary cobordism~$(W,N_{D_1},N_{D_2})$ and a degree one normal map 
$$(F,f,\id_{N_{D_2}}) \colon (W,N_{D_1},N_{D_2}) \to (N_{D_2} \times [0,1],N_{D_2},N_{D_2}).$$ 
Perform surgery below the middle dimension on the interior of~$W$ to obtain the~$2$-connected degree one normal map~$F'$ with surgery obstruction~$x:=\sigma(F') \in L_5(\Z[G])$ that we alluded to in~\eqref{eq:2ConnectedDONM}.
Using
the surjectivity of~$\sigma_5$,  one can find a degree one normal map 
$$\Psi  \colon (V,N_{D_2},N_{D_2}) \to (N_{D_2} \times [0,1],N_{D_2},N_{D_2})$$
 that restricts to the identity on both boundary components and with~$-x$ as its surgery obstruction;
stacking~$\Psi$ on top of~$F'$ leads to a degree one normal map~$F''$ with vanishing surgery obstruction~$\sigma(F'') \in L_5(\Z[G])$ and it follows that~$F''$ is normal bordant rel.\ $M_K \times [0,1]$ to a homotopy equivalence.
Thus, we have two arguments for why~$N_{D_1}$ and~$N_{D_2}$ are s-cobordant rel.\ boundary.
%

If~$G$ is good, we can apply Freedman's~$5$-dimensional relative~$s$-cobordism theorem~\cite[Theorem 7.1A]{FreedmanQuinn} and it follows that~$N_{D_1}$ and~$N_{D_2}$ are homeomorphic rel.\ boundary.
As we already mentioned,~\cite[Lemma~2.5]{ConwayPowellDiscs} implies that the discs are ambiently isotopic rel.\ boundary.
\end{proof}

We conclude by proving a statement that was used in the proof of Theorem~\ref{thm:Classification}.

\begin{remark}
\label{rem:A4}
Assume that~$G$ is a geometrically~$2$-dimensional ribbon group with vanishing Whitehead torsion (condition~\ref{item:W}).
We claim that~$G$ satisfies~$\widetilde{L}_4(\Z[G])=0$ if and only if it satisfies~\ref{item:A4},  which stipulates that the assembly map~$A_4 \colon  H_4(BG;\mathbf{L}_\bullet) \to L_4(\Z[G])$ is an isomorphism.
Since~$G$ is a ribbon group, there is a (smoothly embedded) ribbon disc~$D \subset D^4$ with~$\pi_1(N_D) \cong G$.
This time, $N_D$ might not be aspherical, but it is still a $2$-complex with vanishing $H_2$.
An Atiyah-Hirzebruch spectral sequence argument therefore shows that $H_4(N_D;\mathbf{L}\langle 1 \rangle_\bullet) \to H_4(BG;\mathbf{L}\langle 1 \rangle_\bullet)$ is an isomorphism.
Here, it is helpful to note that $H_2(G)=0$: use~$H_2(N_D)=0$ together with the exact sequence $\pi_2(N_D) \to H_2(N_D) \to H_2(\pi_1(N_D)) \to 0$;  see e.g.~\cite[Equation (0.1)]{Brown}.
The same argument as above then produces the following commutative diagram:
$$ 
\xymatrix@R0.5cm{
 \mathcal{N}(N_{D},M_K) \ar[rr]^-{\sigma_4}\ar[d]^\cong&& L_{4}(\Z[G])\ar[d]^= \\
H_4(N_{D};\mathbf{L}\langle 1 \rangle_\bullet) \ar[r]^\cong \ar[d]^\cong&H_4(N_{D};\mathbf{L}_\bullet) \ar[r]^-{}\ar[d]^\cong& L_4(\Z[G])\ar[d]^=\\  
H_4(BG;\mathbf{L}\langle 1 \rangle_\bullet) \ar[r]^\cong&H_4(BG;\mathbf{L}_\bullet) \ar[r]^-{A_4}& L_4(\Z[G]).
  }
$$
As explained in~\eqref{eq:NormalInvariants} and~\cite[Section 11.3B]{FreedmanQuinn},  the surgery obtruction~$\sigma_4$ maps the set of  normal invariants~$ \mathcal{N}(N_{D},\partial N_D)\cong \Z$ isomorphically onto the~$L_4(\Z)\cong \Z$-summand of~$L_4(\Z[G])=L_4(\Z) \oplus \widetilde{L}_4(\Z[G]).$
The claim now follows by combining this fact with the commutativity of the diagram.
\end{remark}

\bibliographystyle{alpha}
\bibliography{BiblioGribbon}
\end{document}